\numberwithin{equation}{section}
\newtheorem{theorem}[equation]{Theorem}
\newtheorem{definition}[equation]{Definition}
\newtheorem{Th}{Theorem}
\newtheorem{property}[equation]{}
\theoremstyle{definition}
\newtheorem{eg}[equation]{Example}
\newcommand{\F}{\mathcal{F}}
\renewcommand{\L}{\mathcal{L}}
\newcommand{\M}{\mathcal{M}}
\newcommand{\N}{\mathcal{N}}
\newcommand{\K}{\mathcal{K}}
\newcommand{\D}{\mathbf{D}}
\renewcommand{\H}{\mathcal{H}}
\newcommand{\Syl}{\operatorname{Syl}}
\newcommand{\m}{\mathcal}
\newcommand{\ov}{\overline}
\newcommand{\One}{\operatorname{\mathbf{1}}}
\newcommand{\W}{\mathbf{W}}
\def \<{\langle }
\def \>{\rangle }
\renewcommand{\phi}{\varphi}
\title[Products of partial normal subgroups]{Products of partial normal subgroups}
\author[E.~Henke]{Ellen Henke}
\address{Institute of Mathematics, 
University of Aberdeen, Fraser Noble Building, Aberdeen AB24 3UE, U.K.}
\email{ellen.henke@abdn.ac.uk}
\begin{document}

\begin{abstract}
We show that the product of two partial normal subgroups of a locality (in the sense of Chermak) is again a partial normal subgroup. This generalizes a theorem of Chermak and fits into the context of building a local theory of localities.
\end{abstract}

\maketitle

\section{Introduction}

\textbf{Keywords:} Fusion systems; localities; transporter systems.

\smallskip

\textbf{Subject classification:} 20N99, 20D20

\bigskip

Localities were introduced by Andrew Chermak \cite{Chermak:2013}, in the context of his proof of the existence and uniqueness of centric linking systems. Roughly speaking, localities are group-like structures which are essentially the ``same'' as the transporter systems of Oliver and Ventura \cite{Oliver/Ventura}; see the appendix to \cite{Chermak:2013}. As centric linking systems are special cases of transporter systems, the existence of centric linking systems implies that there is a locality attached to every fusion system. It is work in progress of Chermak to build a local theory of localities similar to the local theory of fusion systems as developed by Aschbacher \cite{Aschbacher:2008}, \cite{Aschbacher:2011}. In fact, it seems often an advantage to work inside of localities, where some group theoretical concepts and constructions can be expressed more naturally than in fusion systems. Thus, one can hope to improve the local theory of fusion systems, once a way of translating between fusion systems and localities is established. The results of this paper can be considered as first evidence that some constructions are easier in the world of localities. We prove that the product of partial normal subgroups of a locality is itself a partial normal subgroup, whereas in fusion systems the product of normal subsystems has only been defined in special cases; see \cite[Theorem~3]{Aschbacher:2011}. It is work in progress of Chermak to show that there is a one-to-one correspondence between the normal subsystems of a saturated fusion system $\F$ and the partial normal subgroups of a linking locality attached to $\F$ in the sense of \cite[Definition~2]{Henke:2015}. This is one reason why our result seems particularly important in the case of linking localities. Another reason is that the concept of a linking locality generalizes properties of localities corresponding to centric linking systems and is thus interesting for studying the homotopy theory of fusion systems; see \cite{Broto/Levi/Oliver:2003a}, \cite{BCGLO1}, \cite{BCGLO2}, \cite{Henke:2015}.  It is however crucial for our proof that we work with arbitrary localities, since our arguments rely heavily on the theory of quotient localities introduced by Chermak \cite{Chermak:2015}, and a quotient of a linking locality is not necessarily a linking locality again. Thus, we feel that the method of our proof gives evidence for the value of studying localities in general rather than restricting attention only to the special case of linking localities. 

\smallskip

To describe the results of this paper in more detail, let $\L$ be a partial group as defined in \cite[Definition~2.1]{Chermak:2013} and \cite[Definition~1.1]{Chermak:2015}. Thus, there is an involutory bijection $\L\rightarrow \L,f\mapsto f^{-1}$, called an ``inversion'', and a multivariable product $\Pi$ which is only defined on certain words in $\L$. Let $\D$ be the domain of the product, i.e. $\D$ is a set of words in $\L$ and $\Pi$ is a map $\D\rightarrow \L$. Following Chermak, we call a non-empty subset $\m{H}$ of $\L$ a \textit{partial subgroup} of $\L$ if $h^{-1}\in \H$ for all $h\in\m{H}$ and $\Pi(v)\in\m{H}$ for all words $v$ in the alphabet $\m{H}$ with $v\in\D$. A partial subgroup $\N$ is called a \textit{partial normal subgroup} if $x^f:=\Pi(f^{-1},x,f)\in\N$ for all $x\in\N$ and $f\in\L$ for which $(f^{-1},x,f)\in\D$.  Given two subsets $\M$ and $\N$ of $\L$, the product $\M\N$ is naturally defined by
$$\M\N=\{\Pi(m,n)\colon m\in\M,\;n\in\N,\;(m,n)\in\D\}.$$
The problem is however to show that this is again a partial normal subgroup if $\M$ and $\N$ are partial normal subgroups. Indeed, as we show in Example~\ref{Counterexample}, this is not true in general if $\L$ is an arbitrary partial group. It is true however in the important case that $(\L,\Delta,S)$ is a locality. Chermak \cite[Theorem~5.1]{Chermak:2015} proved this in a special case and we build on his result to prove the general case. More precisely, we prove the following theorem:

\begin{Th}\label{Main}
Let $(\L,\Delta,S)$ be a locality and let $\M,\N$ be partial normal subgroups of $\L$. Then $\M\N=\N\M$ is a partial normal subgroup of $\L$ and $(\M\N)\cap S=(\M\cap S)(\N\cap S)$. Moreover, for every $g\in \M\N$ there exists $m\in\M$ and $n\in \N$ such that $(m,n)\in\D$, $g=\Pi(m,n)$ and $S_g=S_{(m,n)}$.
\end{Th}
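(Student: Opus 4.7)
The plan is to deduce the theorem from Chermak's special case in \cite[Theorem~5.1]{Chermak:2015} via the quotient locality machinery of \cite[Section~4]{Chermak:2015}, with the main effort focused on the ``moreover'' clause. The key observation is that once one knows that every $g \in \M\N$ admits a decomposition $g = \Pi(m,n)$ with $m \in \M$, $n \in \N$, $(m,n) \in \D$, and $S_g = S_{(m,n)}$, the remaining assertions follow rather cleanly. The equality $\M\N = \N\M$ drops out by applying the good-representative statement to a decomposition and to its inverse, using $\Pi(m,n)^{-1} = \Pi(n^{-1}, m^{-1})$; the equality $(\M\N) \cap S = (\M\cap S)(\N\cap S)$ follows by analyzing what it means for $g = \Pi(m,n)$ to satisfy $S_{(m,n)} = S_g = S$, which should force $m$ and $n$ into $S$ by properties of localities; and closure of $\M\N$ under products and under $\L$-conjugation follows by standard word manipulation inside $\D$ using the partial normality of $\M$ and $\N$ individually.

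The overall structure of the argument is an induction on an appropriate complexity measure, such as $|\L| + |\M| + |\N|$ or a lexicographic measure involving the indices of $\M$ and $\N$. The inductive reduction proceeds by choosing a nontrivial partial normal subgroup $\K$ of $\L$ contained in $\M \cap \N$---for instance a minimal such $\K$, or the partial subgroup generated by $\M \cap \N \cap S$. Passing to the quotient locality $\L/\K$, the images $\M/\K$ and $\N/\K$ are partial normal subgroups of strictly smaller complexity, so by the inductive hypothesis the conclusion holds for their product. The correspondence between partial normal subgroups of $\L$ containing $\K$ and partial normal subgroups of $\L/\K$ then lifts the result back to $\L$, provided the good-representative property also lifts---which requires that one can choose preimages compatible with $\D$ and with the stabilizer data $S_{(-,-)}$. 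In the base case, where no suitable $\K$ exists, the partial normal subgroups $\M$ and $\N$ are sufficiently disjoint that Chermak's special case applies directly.

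The hardest step, I expect, is the refinement that upgrades an arbitrary decomposition $g = \Pi(m_0, n_0)$ to one satisfying $S_g = S_{(m,n)}$. A priori $S_{(m_0, n_0)}$ can be strictly smaller than $S_g$, and one must ``absorb'' the gap by modifying $m_0$ and $n_0$ in a compensating fashion while remaining in $\M$ and $\N$ respectively. My plan here is: given $s \in S_g \setminus S_{(m_0, n_0)}$, identify the precise obstruction to $s$-conjugation along the word $(m_0, n_0)$; then use the partial normality of $\M$ to produce an element $t$ in an appropriate subgroup of $S$ such that the replacement $(m_0 t, t^{-1} n_0)$ has strictly larger defining domain. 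Iterating, together with the finiteness of $\Delta$, should force termination at $S_g$. Verifying that the candidates actually lie in $\D$, that each step strictly enlarges the defining domain rather than cycling, and that the lifting from the quotient preserves the equality $S_g = S_{(m,n)}$, will be the most delicate part of the proof and is where the bulk of the combinatorial work will reside.
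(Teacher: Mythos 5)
Your high-level strategy (quotient by a partial normal subgroup inside $\M\cap\N$, reduce to Chermak's Theorem~5.1 in the quotient, lift back) is the same as the paper's, which simply takes $\K=\M\cap\N$ in a single step (no induction is needed, since $\ov{\M}\cap\ov{\N}=\ov{\M\cap\N}=1$ already after one quotient). But two of your concrete steps have genuine gaps. First, you assert that once the good-representative property ($S_g=S_{(m,n)}$) is in hand, closure of $\M\N$ under products and conjugation ``follows by standard word manipulation using the partial normality of $\M$ and $\N$ individually.'' This is exactly what fails in general: Example~\ref{Counterexample} exhibits a partial group where $\M\N$ is not a partial normal subgroup, and even in a locality, conjugating $g=mn$ by $f$ requires the word $(f^{-1},m,f,f^{-1},n,f)$ to lie in $\D$, which is not implied by $(f^{-1},g,f)\in\D$ together with $S_g=S_{(m,n)}$ (the intermediate element $(s^{f^{-1}})^m$ need not lie in $S_f$). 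The paper never verifies the axioms for $\M\N$ directly; instead it shows $\M\N$ \emph{equals} the full preimage $\H=\{f\in\L\colon \ov{f}\in\ov{\M}\,\ov{\N}\}$, which is a partial normal subgroup automatically by the correspondence \ref{PartialSubgroupCorrespondence}(b). That identification is the missing structural idea, and it is also what delivers $\M\N=\N\M$ (both equal $\H$ by symmetry).

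Second, your plan for the ``hardest step'' --- iteratively replacing $(m_0,n_0)$ by $(m_0t,\,t^{-1}n_0)$ to enlarge $S_{(m_0,n_0)}$ up to $S_g$ --- is not workable as described and is not what the paper does. To keep the modified pair in $\M\times\N$ you need $t\in\M\cap\N$, i.e.\ $t\in\K$, and you give no argument that such a $t$ exists or that the domain strictly increases rather than cycling. The paper obtains good representatives with no iteration: Chermak's Lemma~5.2 already produces them in the quotient $\ov{\L}$ (where $\ov{\M}\cap\ov{\N}=1$), and the lift is done by choosing $\uparrow_\K$-maximal preimages $m,n$, proving $\ov{S}_{\ov{f}}=\ov{S_f}$ for $\uparrow_\K$-maximal $f$ (\ref{Sfbar}), deducing $S_{mn}=S_{(m,n)}$ with $mn$ again $\uparrow_\K$-maximal, and then absorbing the correction $k\in\K$ for an arbitrary preimage $f=k(mn)$ into $\M$ via Stellmacher's Splitting Lemma \ref{uparrow}(c), using crucially that $\K\subseteq\M$. (A minor further slip: $S_{(m,n)}=S$ forces $m,n\in N_\L(S)$, not $m,n\in S$; one then finishes inside the finite group $N_\L(S)$ with Sylow subgroup $S$.) Without the preimage identification and the $\uparrow_\K$-maximality machinery, your outline does not close.
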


To understand the technical conditions stated in the last sentence of the theorem, we recall from \cite{Chermak:2013} and \cite{Chermak:2015} that 
$$S_g=\{s\in S\colon (g^{-1},s,g)\in\D\mbox{ and }s^g\in S\}$$
for any $g\in \L$. Moreover, for a word $v=(g_1,\dots,g_n)$ in $\L$, $S_v$ is the set of all $s\in S$ such that there exist $x_0,\dots,x_n\in S$ with $s=x_0$, $(g_i^{-1},x_{i-1},g_i)\in\D$ and $x_{i-1}^{g_i}=x_i$ for $i=1,\dots,n$. By Proposition~2.6 and Corollary~2.7 in \cite{Chermak:2015}, the sets $S_g$ and $S_v$ are subgroups of $S$, $S_g\in\Delta$ for any $g\in\L$, and $S_v\in\Delta$ if and only if $v\in\D$. Therefore, the condition $S_g=S_{(m,n)}$ stated in the theorem is crucial for proving that certain products are defined in $\L$. This is particularly important for the proof of our next theorem which concerns products of more than two partial normal subgroups. Given subsets $\N_1,\N_2,\dots,\N_l$ of $\L$ define their product via
$$\N_1\N_2\dots \N_l:=\{\Pi(n_1,n_2,\dots,n_l)\colon (n_1,n_2,\dots,n_l)\in\D,\;n_i\in\N_i\mbox{ for }1\leq i\leq l\}.$$
We prove:

\begin{Th}\label{Main2}
 Let $\N_1,\N_2,\dots,\N_l$ be partial normal subgroups of a locality $(\L,\Delta,S)$. Then $\N_1\N_2\dots\N_l$ is a partial normal subgroup of $\L$. Moreover, the following hold:
\begin{itemize}
 \item [(a)] $\N_1\N_2\dots \N_l=(\N_1\dots \N_k)(\N_{k+1}\dots\N_l)$ for every $1\leq k<l$.
 \item [(b)] $\N_1\N_2\dots \N_l=\N_{1\sigma}\N_{2\sigma}\dots\N_{l\sigma}$ for every permutation $\sigma\in S_l$.
 \item [(c)] For every $g\in \N_1\dots\N_l$ there exists $(n_1,\dots,n_l)\in\D$ with $n_i\in\N_i$ for every $i=1,\dots,l$, $g=\Pi(n_1\dots n_l)$ and $S_g=S_{(n_1,\dots,n_l)}$. 
\end{itemize}
\end{Th}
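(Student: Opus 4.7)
The approach is to induct on $l$, with $l=1$ trivial and $l=2$ being Theorem~\ref{Main}. Suppose $l\ge 3$ and that the statement is proved for shorter families; then by induction $\M:=\N_1\N_2\dots\N_{l-1}$ is itself a partial normal subgroup satisfying clauses~(a)--(c) relative to its own factorization, so Theorem~\ref{Main} applies to the pair $(\M,\N_l)$.

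The key point will be the set-theoretic identity $\N_1\N_2\dots\N_l=\M\N_l$, since once it is in hand, partial normality of $\N_1\N_2\dots\N_l$ is immediate from Theorem~\ref{Main}. The inclusion from left to right should follow directly from the partial-group associativity axiom, which extracts from $(n_1,\dots,n_l)\in\D$ the subword $(n_1,\dots,n_{l-1})\in\D$ with product $m:=\Pi(n_1,\dots,n_{l-1})\in\M$ and gives $(m,n_l)\in\D$ with $\Pi(m,n_l)=\Pi(n_1,\dots,n_l)$. For the reverse inclusion and clause~(c), given $g\in\M\N_l$ I will use the last clause of Theorem~\ref{Main} to write $g=\Pi(m,n_l)$ with $m\in\M$, $n_l\in\N_l$, $(m,n_l)\in\D$ and $S_g=S_{(m,n_l)}$; applying the inductive~(c) to $m$ yields $n_i\in\N_i$ with $(n_1,\dots,n_{l-1})\in\D$, $m=\Pi(n_1,\dots,n_{l-1})$ and $S_m=S_{(n_1,\dots,n_{l-1})}$. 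Unfolding the definition of $S_v$ for the two words involved, and using that for $s\in S_m$ the iterated conjugation along $(n_1,\dots,n_{l-1})$ coincides with $s^m$, I expect to obtain
\[
S_{(n_1,\dots,n_l)}=\{s\in S_m: s^m\in S_{n_l}\}=S_{(m,n_l)}=S_g\in\Delta.
\]
The cited Proposition~2.6 and Corollary~2.7 of \cite{Chermak:2015} will then force $(n_1,\dots,n_l)\in\D$, and associativity will give $\Pi(n_1,\dots,n_l)=g$; this simultaneously proves the reverse inclusion and clause~(c).

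Clause~(a) is handled by the same mechanism, applied symmetrically: for $g\in(\N_1\dots\N_k)(\N_{k+1}\dots\N_l)$ I will choose a good decomposition $g=\Pi(a,b)$ via Theorem~\ref{Main}, then refine $a$ and $b$ via the inductive~(c), and identify $S_{(n_1,\dots,n_l)}=S_{(a,b)}=S_g\in\Delta$ to place the full word in $\D$. The other inclusion in~(a) follows again from the associativity axiom. For clause~(b), once~(a) is established, I can group any adjacent pair $\N_i\N_{i+1}$, appeal to the binary commutativity $\N_i\N_{i+1}=\N_{i+1}\N_i$ from Theorem~\ref{Main}, and ungroup; since adjacent transpositions generate $S_l$, all permutations are covered.

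The main obstacle throughout is the membership $(n_1,\dots,n_l)\in\D$ of long words in the domain of the product; this is precisely where the technical strengthening $S_g=S_{(m,n)}$ in the final clause of Theorem~\ref{Main} becomes indispensable, since it allows the domain information for the short binary word to propagate up to the long word via the identification of the respective $S_v$'s. Once this is in place, the remaining verifications are essentially bookkeeping built from Theorem~\ref{Main} and the partial-group axioms.
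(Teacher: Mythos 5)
Your proposal is correct and follows essentially the same route as the paper: the paper's Lemma \ref{Step1} is exactly your "refine a good binary decomposition $g=\Pi(f,g')$ with $S_g=S_{(f,g')}$ via the inductive clause (c), identify $S_{(n_1,\dots,n_l)}=S_{(f,g')}=S_g\in\Delta$ to force the long word into $\D$" mechanism, applied for a general split point $k$ inside the induction of Lemma \ref{Step2}, and part (b) is handled identically via adjacent transpositions and the binary commutativity from Theorem \ref{Main}. The only difference is organizational (you single out $k=l-1$ first and then treat general $k$ for (a), while the paper runs all $k$ through the same lemma), which changes nothing of substance.
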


As already mentioned above, it is work in progress of Andrew Chermak to show that for every fusion system $\F$ and a linking locality $(\L,\Delta,S)$ attached to $\F$ there is a one-to-one correspondence between the normal subsystems of $\F$ and the partial normal subgroups of $\L$. When this work is complete, our results will imply the existence of a product of an arbitrary finite number of normal subsystems of $\F$.

\smallskip

In this text only relatively few demands will be made on understanding the concepts introduced in \cite{Chermak:2013} and \cite{Chermak:2015}. In Section~\ref{localities}, we point the reader to the few general results needed about partial groups, give a concise definition of a locality and review some basic facts about localities. In Section~\ref{Quotients}, we summarize what is needed about partial normal subgroups and quotient localities.

\section{Partial groups and localities}\label{localities}
 
We refer the reader to \cite[Definition~2.1]{Chermak:2013} or \cite[Definition~1.1]{Chermak:2015} for the precise definition of a partial group, and to the elementary properties of partial groups stated in \cite[Lemma~2.2]{Chermak:2013} or \cite[Lemma~1.4]{Chermak:2015}. Adapting Chermak's notation we write $\W(\L)$ for the set of words in a set $\L$, $\emptyset$ of the empty word, and $v_1\circ v_2\circ\dots\circ v_n$ for the concatenation of words $v_1,\dots,v_n\in\W(\L)$.

\smallskip

\textbf{For the remainder of this text let $\L$ be a partial group with product $\Pi\colon \D\rightarrow \L$ defined on the domain $\D\subseteq\W(\L)$.}  

\smallskip

Again following Chermak's notation, we set $\One=\Pi(\emptyset)$. Moreover, given a word $v=(f_1,\dots,f_n)\in\D$, we write $f_1f_2\dots f_n$ for the product $\Pi(v)$. 
Recall the definitions of partial subgroups and partial normal subgroups from the introduction. Note that a partial subgroup of $\L$ is always a partial group itself whose product is the restriction of the product $\Pi$ to $\W(\H)\cap\D$. Observe furthermore that $\L$ forms a group in the usual sense if $\W(\L)=\D$; see \cite[Lemma~1.3]{Chermak:2015}. So it makes sense to call a partial subgroup $\H$ of $\L$ a \textit{subgroup of $\L$} if $\W(\H)\subseteq\D$. In particular, we can talk about \textit{$p$-subgroups of $\L$} meaning subgroups of $\L$ whose order is a power of $p$. 

\smallskip

We will need the Dedekind Lemma \cite[Lemma~1.10]{Chermak:2015} in the following slightly more general form:

\begin{property}[Dedekind Lemma] \label{Dedekind}
 Let $\H$, $\K$, $\mathcal{A}$ be subsets of $\L$ such that $\mathcal{A}$ is a partial subgroups of $\L$ and $\K\subseteq\mathcal{A}$. Then $\mathcal{A}\cap(\H\K)=(\mathcal{A}\cap\H)\K$ and $\mathcal{A}\cap(\K\H)=\K(\mathcal{A}\cap\H)$.
\end{property}

\begin{proof}
Clearly, $(\mathcal{A}\cap\H)\K\subseteq\mathcal{A}\cap (\H\K)$. Taking $h\in\H$ and $k\in\K$ with $(h,k)\in\D$ and $hk\in\mathcal{A}$, we have $(h,k,k^{-1})\in\D$ by \cite[Lemma~1.4(d)]{Chermak:2015} and then $h=h(kk^{-1})=(hk)k^{-1}\in\mathcal{A}$  as $\K\subseteq \mathcal{A}$ and $\mathcal{A}$ is a partial subgroup. Hence, $h\in\mathcal{A}\cap\H$ and $hk\in(\mathcal{A}\cap \H)\K$. The second equation follows similarly.
\end{proof}

Before we continue with more definitions, we illustrate the concepts we mentioned so far with examples. For this purpose we say that two groups $G_1$ and $G_2$ \textit{form an amalgam}, if the set-theoretic intersection $G_1\cap G_2$ is a subgroup of both $G_1$ and $G_2$, and the restriction of the multiplication on $G_1$ to a multiplication on $G_1\cap G_2$ is the same as the restriction of the multiplication on $G_2$ to a multiplication on $G_1\cap G_2$. 

\begin{eg}\label{PartialGroupEx}
Let $G_1$ and $G_2$ be groups which form an amalgam. Set $\L=G_1\cup G_2$ and $\D=\W(G_1)\cup\W(G_2)$. Define a partial product $\Pi\colon\D\rightarrow \L$ by sending $v=(f_1,\dots,f_n)\in\W(G_i)$ to the product $f_1\dots f_n$ in the group $G_i$ for $i=1,2$. Define an inversion $\L\rightarrow \L$ by sending $f\in G_i$ to the inverse of $f$ in the group $G_i$ for $i=1,2$. Then $\L$ with these structures forms a partial group. (For readers familiar with the concept of an objective partial group as introduced in \cite[Definition~2.6]{Chermak:2013} or \cite[Definition~2.1]{Chermak:2015} we mention that, setting $\Delta:=\{G_1,G_2\}$, $(\L,\Delta)$ is an objective partial group if $G_1\cap G_2$ is properly contained in $G_1$ and $G_2$.)

\smallskip

Let $\K$ be a subset of $\L$. Then $\K$ is a partial subgroup of $\L$ if and only if $\K\cap G_i$ is a subgroup of $G_i$ for each $i=1,2$. The subset $\K$ is a subgroup of $\L$ if and only if $\K$ is a subgroup of $G_i$ for some $i=1,2$. Moreover, $\K$ is a partial normal subgroup of $\L$ if and only if $(\K\cap G_i)\unlhd G_i$ for $i=1,2$. 
\end{eg}

We use the construction method introduced in the previous example to show that the product of two partial normal subgroups of a partial group is not in general itself a partial normal subgroup.

\begin{eg}\label{Counterexample}
Let $G_1\cong C_2\times C_4$ and let $G_2$ be a dihedral group of order $16$. Choose $G_1$ and $G_2$ such that $G_1$ and $G_2$ form an amalgam with $G_1\cap G_2\cong C_2\times C_2$ and $\Phi(G_1)=Z(G_2)$. Let $\M$ and $\N$ be the two cyclic subgroups of $G_1$ of order $4$. Form the locality $\L$ as in Example~\ref{PartialGroupEx}. As $G_1$ is abelian, a subgroup $\K$ of $G_1$ is normal in $G_1$ and thus a partial normal subgroup of $\L$ if and only if $\K\cap G_2\unlhd G_2$. As $G_1\cap G_2\cong C_2\times C_2$ and $\M$ and $\N$ are cyclic of order $4$, we have $\M\cap G_2=\N\cap G_2=\Phi(G_1)=Z(G_2)\unlhd G_2$. Thus $\M$ and $\N$ are partial normal subgroups of $\L$. The product $\M\N$ in $\L$ is the same as the product $\M\N$ in $G_1$ and thus equal to $G_1$. However, as $G_2$ does not have a normal fours subgroup, $G_1\cap G_2$ is not normal in $G_2$ and thus $\M\N=G_1$ is not a partial normal subgroup of $\L$.  
\end{eg}

The previous example shows that the concept of a partial group (and even the concept of an objective partial group) is too general for our purposes. Therefore, we will focus on localities. We give a definition of a locality which, in contrast to the definition given by Chermak \cite{Chermak:2013} or \cite{Chermak:2015}, does not require the reader to be familiar with the definition of an objective partial group and can easily seen to be equivalent to Chermak's definition. For any $g\in\L$, $\D(g)$ denotes the set of $x\in\L$ with $(g^{-1},x,g)\in\D$. Thus, $\D(g)$ denotes the set of elements $x\in\L$ for which the conjugation $x^g:=\Pi(g^{-1},x,g)$ is defined. If $g\in\L$ and $X\subseteq \D(g)$ we set $X^g:=\{x^g\colon x\in X\}$. If we write $X^g$ for some $g\in\L$ and some subset $X\subseteq \L$, we will always implicitly mean that $X\subseteq\D(g)$.

\begin{definition}\label{LocalityDefinition}
We say that $(\L,\Delta,S)$ is a \textbf{locality} if the partial group $\L$ is finite as a set, $S$ is a $p$-subgroup of $\L$, $\Delta$ is a non-empty set of subgroups of $S$, and the following conditions hold:
\begin{itemize}
\item[(L1)] $S$ is maximal with respect to inclusion among the $p$-subgroups of $\L$.
\item[(L2)] A word $(f_1,\dots,f_n)\in\W(\L)$ is an element of $\D$ if and only if there exist $P_0,\dots,P_n\in\Delta$ such that 
\begin{itemize}
\item [(*)] $P_{i-1}\subseteq \D(f_i)$ and $P_{i-1}^{f_i}=P_i$.
\end{itemize}
\item[(L3)] For any subgroup $Q$ of $S$, for which there exist $P\in\Delta$ and $g\in\L$ with $P\subseteq \D(g)$ and $P^g\leq Q$, we have $Q\in\Delta$.
\end{itemize}
If $(\L,\Delta,S)$ is a locality and $v=(f_1,\dots,f_n)\in\W(\L)$, then we say that $v\in\D$ via $P_0,\dots,P_n$ (or $v\in\D$ via $P_0$), if $P_0,\dots,P_n\in\Delta$ and (*) holds.
\end{definition}

\textbf{From now on let $(\L,\Delta,S)$ be a locality.}

\smallskip

Note that $P=P^{\One}\leq S$ for all $P\in\Delta$. As $\Delta\neq\emptyset$, property (L3) implies thus $S\in\Delta$. For any $g\in\L$, write $c_g$ for the conjugation map
$$c_g\colon \D(g)\rightarrow \L,\;x\mapsto x^g.$$
Recall the definitions of $S_g$ and $S_v$ from the introduction. Note that $S_g\subseteq\D(g)$. For any subgroup $X$ of $\L$ set
$$N_\L(X):=\{f\in\L\colon X\subseteq\D(f),\;X^f=X\}.$$

\begin{property}[Important properties of localities]\label{LocalitiesProp}
The following hold:
\begin{itemize}
\item [(a)] $N_\L(P)$ is a subgroup of $\L$ for each $P\in\Delta$.
\item [(b)] Let $P\in\Delta$ and $g\in\L$ with $P\subseteq S_g$. Then $Q:=P^g\in\Delta$ (so in particular $Q$ is a subgroup of $S$). Moreover, $N_\L(P)\subseteq \D(g)$ and 
$$c_g\colon N_\L(P)\rightarrow N_\L(Q)$$
is an isomorphism of groups.
\item [(c)] Let $w=(g_1,\dots,g_n)\in\D$ via $(X_0,\dots,X_n)$. Then 
$$c_{g_1}\circ \dots c_{g_n}=c_{\Pi(w)}$$
 is a group isomorphism $N_\L(X_0)\rightarrow N_\L(X_n)$.
\item [(d)] For every $g\in\L$, $S_g\in\Delta$. In particular, $S_g$ is a subgroup of $S$.
\item [(e)] For any $w\in\W(\L)$, $S_w$ is a subgroup of $S_{\Pi(w)}$, and $S_w\in\Delta$ if and only if $w\in\D$.
\end{itemize}
\end{property}

\begin{proof}
Properties (a)-(c) correspond to the statements in \cite[Lemma~2.3]{Chermak:2015} except for the fact stated in (b) that $Q\in\Delta$. This is however true by \cite[Proposition~2.6(c)]{Chermak:2015}.  Property (d) is true by \cite[Proposition~2.6(a)]{Chermak:2015} and property (e) is stated in \cite[Corollary~2.7]{Chermak:2015}.
\end{proof}

\section{Partial normal subgroups and quotient localities}\label{Quotients}

In this section we continue to assume that $(\L,\Delta,S)$ is a locality. The following theorem is a special case of Theorem~\ref{Main} and will be used to prove the more general theorem.

\begin{theorem}\label{ImageSituation}
 Let $\M$, $\N$ be partial normal subgroups of $\L$ such that $\M\cap\N=1$. Then $\M\N=\N\M$ is a partial normal subgroup of $\L$. Moreover, for any $f\in\M\N$ there exists $m\in\M$ and $n\in\N$ such that $(m,n)\in\D$, $f=mn$ and $S_f=S_{(m,n)}$.
\end{theorem}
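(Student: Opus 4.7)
The plan is to reduce the statement to Chermak's special-case result \cite[Theorem~5.1]{Chermak:2015} by passing to the quotient locality $\ov{\L}:=\L/\M$ of \cite{Chermak:2015} and exploiting the hypothesis $\M\cap\N=1$. Let $\rho\co\L\to\ov{\L}$ denote the projection. Then $\ov{\N}:=\rho(\N)$ is a partial normal subgroup of $\ov{\L}$, and the restriction $\rho|_{\N}\co\N\to\ov{\N}$ is a bijection; this is the decisive consequence of $\M\cap\N=1$, since $\ker(\rho)\cap\N=\M\cap\N=1$. Also, $\rho(\M\N)=\ov{\N}$, so $f\in\M\N$ implies $\rho(f)\in\ov{\N}$.

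The main step is the ``moreover'' clause: for every $f\in\M\N$, to produce $m\in\M$ and $n\in\N$ with $(m,n)\in\D$, $f=mn$ and $S_f=S_{(m,n)}$. Chermak's Theorem~5.1 applied to $f$ yields a factorization $f=m_0 n_0$ with $m_0\in\M$, $n_0\in\L$, $(m_0,n_0)\in\D$, and $S_f=S_{(m_0,n_0)}$. Projecting gives $\rho(n_0)=\rho(f)\in\ov{\N}$, so there is a unique $n\in\N$ with $\rho(n)=\rho(n_0)$. The crux is to identify $n_0$ with $n$, equivalently to show $n_0\in\N$. I would argue this by applying Chermak's splitting a second time to $n_0$ (viewed as an element of $\M\N$ since $\rho(n_0)\in\ov{\N}$), obtaining $n_0=\hat m\hat n$ with $\hat m\in\M$, $\hat n\in\L$, and $S_{n_0}=S_{(\hat m,\hat n)}$. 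The uniqueness features of Chermak's splitting, combined with $\M\cap\N=1$, should then force $\hat m=\One$ and identify $\hat n$ with the canonical lift $n\in\N$. The $S$-condition is propagated using Property~\ref{LocalitiesProp}(e) together with axiom (L2). I expect this identification step to be the principal obstacle, as it requires simultaneously tracking the partial product structure and the $S_f$ data through two applications of Chermak's theorem while invoking $\M\cap\N=1$ in a precise way.

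Once the splitting claim is in place, the remaining assertions follow by standard locality manipulations. For $\M\N=\N\M$: given $f=mn$ with $S_f=S_{(m,n)}$, axiom (L2) extends $(m,n)\in\D$ to $(m,n,m^{-1},m)\in\D$, whence $f=(mnm^{-1})\cdot m=n^{m^{-1}}\cdot m\in\N\M$ by partial normality of $\N$; the reverse inclusion is symmetric. Closure of $\M\N$ under inversion then follows from $(mn)^{-1}=n^{-1}m^{-1}\in\N\M=\M\N$. Closure under conjugation by $g\in\L$: applying (L2) to $S_f=S_{(m,n)}$ with $(g^{-1},f,g)\in\D$ yields $(g^{-1},m,n,g)\in\D$, so $f^g=m^g n^g$ with $m^g\in\M$ and $n^g\in\N$ by partial normality. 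Finally, closure under the partial product: decompose $f_1,f_2\in\M\N$ via the splitting as $f_i=m_i n_i$, write $f_1 f_2=m_1(n_1 m_2)n_2$, apply $\N\M=\M\N$ to replace $n_1 m_2$ by $m' n'\in\M\N$, and reassemble as $(m_1 m')(n' n_2)\in\M\N$, justifying the intermediate products by axiom (L2) together with the Dedekind Lemma~\ref{Dedekind}.
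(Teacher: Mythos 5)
Your proposal diverges substantially from the paper's proof, and it has a genuine gap at the step you yourself flag as the ``principal obstacle.'' The paper's argument for Theorem~\ref{ImageSituation} is a three-line citation: since $\M\cap\N=1\subseteq S$, Chermak's Lemma~5.3 shows that $\M$ normalizes $\N\cap S$ and $\N$ normalizes $\M\cap S$; this is exactly the hypothesis of Chermak's Theorem~5.1, which already asserts that $\M\N=\N\M$ is a partial normal subgroup, and the splitting clause ($f=mn$ with $S_f=S_{(m,n)}$) is Chermak's Lemma~5.2. You have misidentified the content of Theorem~5.1: it is not a factorization statement producing $f=m_0n_0$ with $m_0\in\M$ and $n_0\in\L$; it is the product theorem itself, proved under the mutual-normalization hypothesis. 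What you are actually invoking in your ``first application'' is the coset decomposition $f=kh$ with $k\in\M$ and $h$ an $\uparrow_{\M}$-maximal representative, together with Stellmacher's Splitting Lemma~\ref{uparrow}(c). Since the statement you are asked to prove is precisely the special case that the paper delegates to Chermak, rederiving it from scratch means reproving Theorem~5.1 and Lemma~5.2, not citing them.

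The concrete gap is the identification $n_0\in\N$. If $f\in\M\N$, say $f=mn$, then $f$ and $n$ lie in the same maximal coset $\M h$ of $\M$, where $h$ is $\uparrow_{\M}$-maximal; but $n$ need not itself be $\uparrow_{\M}$-maximal, so $h=k'^{-1}n$ for some $k'\in\M$ that need not be trivial, and there is no reason for $h$ (your $n_0$) to lie in $\N$. The hypothesis $\M\cap\N=1$ gives injectivity of $\rho|_{\N}$ but does not force the distinguished coset representative into $\N$, so ``the uniqueness features of Chermak's splitting'' do not close this step; this is exactly where the mutual-normalization input of Lemma~5.3 (or an equivalent argument) is needed to control $S_{(m,n)}$ with $n$ genuinely in $\N$. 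A secondary issue: your passage to $\ov{\L}=\L/\M$ appeals to $\ov{\N}$ being a partial normal subgroup of $\ov{\L}$, but the correspondence~\ref{PartialSubgroupCorrespondence} quoted in the paper applies only to partial subgroups \emph{containing} the kernel, which $\N$ does not; you would need a separate result on images of partial normal subgroups under projections. The closing manipulations (commutation, inversion, conjugation, and closure under products via $f_1f_2=m_1(n_1m_2)n_2$) are essentially the right shape once the splitting with $S_f=S_{(m,n)}$ is available, but they all depend on the unproved identification step.
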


\begin{proof}
As $\M\cap\N\subseteq S$, it follows from \cite[Lemma~5.3]{Chermak:2015} that $\M$ normalizes $\N\cap S$ and $\N$ normalizes $\M\cap S$. So by \cite[Theorem~5.1]{Chermak:2015}, $\M\N=\N\M$ is a partial normal subgroup of $\L$. Moreover, by \cite[Lemma~5.2]{Chermak:2015}, for any $f\in\M\N$ there exist $m\in\M$ and $n\in\N$ such that $(m,n)\in\D$, $f=mn$ and $S_f=S_{(m,n)}$.
\end{proof}

To deduce Theorem~\ref{Main} from Theorem~\ref{ImageSituation}, we need the theory of quotient localities developed in \cite{Chermak:2015}; see also Sections~3 and 4 in \cite{Chermak:2013}. For the convenience of the reader we quickly summarize this theory here. After that we state some more specialized lemmas needed in our proof. 

\smallskip

\textbf{Throughout let $\K$ be a partial normal subgroup of $\L$ and $T=S\cap\K$.}

\smallskip

\begin{property}\label{T}
\begin{itemize}
\item [(a)] $T$ is strongly closed in $(\L,\Delta,S)$, that is $t^g\in T$ for every $g\in\L$ and every $t\in T\cap S_g$. In particular, $T^g=T$ for any $g\in\L$ with $T\subseteq S_g$.
\item [(b)] $T$ is maximal in the poset of $p$-subgroups of $\N$.
\end{itemize}
\end{property}

\begin{proof}
Let $g\in\L$ and $t\in T\cap S_g$. Then $t^g\in S$ and, as $\N$ is a partial normal subgroup, $t^g\in\N$. Hence, $t^g\in S\cap\N=T$. This proves (a). Property (b) is proved in \cite[Lemma~3.1(c)]{Chermak:2015}.
\end{proof}

We write $\uparrow_\K$ for the relation $\uparrow$ introduced in \cite[Definition~3.6]{Chermak:2015}, but with the partial normal subgroup $\N$ replaced by $\K$. Thus $\uparrow_\K$ is a relation on the set $\L\circ\Delta$ of pairs $(f,P)\in\L\times\Delta$ with $P\leq S_f$. For $(f,P),(g,Q)\in\L\circ\Delta$, we have $(f,P)\uparrow_\K(g,Q)$ if there exist $x\in N_\K(P,Q)$ and $y\in N_\K(P^f,Q^g)$ such that $xg=fy$. We say then $(f,P)\uparrow_\K(g,Q)$ via $(x,y)$. One easily sees that $\uparrow_\K$ is reflexive and transitive. Moreover, $(f,P)\uparrow_\K(f,S_f)$ via $(\One,\One)$. An element $f\in\L$ is called \textit{$\uparrow_\K$-maximal} if $(f,S_f)$ is maximal with respect to the relation $\uparrow_\K$ (i.e. if $(f,S_f)\uparrow_\K(g,Q)$ implies $(g,Q)\uparrow_\K(f,S_f)$ for any $(g,Q)\in\L\circ\Delta$). We summarize some important technical properties of the relation $\uparrow_\K$ in the following lemma.

\begin{property}\label{uparrow}
The following hold:
\begin{itemize}
\item [(a)] Every element of $N_\L(S)$ is $\uparrow_\K$-maximal. In particular, every element of $S$ is $\uparrow_\K$-maximal.
\item [(b)] If $f\in\L$ is $\uparrow_\K$-maximal, then $T\leq S_f$.
\item [(c)] (Stellmacher's Splitting Lemma) Let $(x,f)\in\D$ such that $x\in\K$ and $f$ is $\uparrow_\K$-maximal. Then $S_{(x,f)}=S_{xf}$. 
\end{itemize}
\end{property} 

\begin{proof}
Property (a) is \cite[Lemma~3.7(a)]{Chermak:2015}, (b) is \cite[Proposition~3.9]{Chermak:2015} and (c) is \cite[Lemma~3.12]{Chermak:2015}.
\end{proof}

The relation $\uparrow_\K$ is crucial for defining a quotient locality $\L/\K$ somewhat analogously to quotients of groups. A \textit{coset} of $\K$ in $\L$ is of the form $\K f =\{kf\colon k\in\K,\;(k,f)\in\D\}$ for some $f\in\L$. A \textit{maximal coset} of $\K$ is a coset which is maximal with respect to inclusion among the cosets of $\K$ in $\L$. The set of these maximal cosets is denoted by $\L/K$.

\begin{property}\label{MaxCosets}
The following hold:
\begin{itemize}
\item [(a)] $f\in\L$ is $\uparrow_\K$-maximal if and only if $\K f$ is a maximal coset.
\item [(b)] The maximal cosets of $\K$ form a partition of $\L$.
\end{itemize}
\end{property}

\begin{proof}
This is Proposition~3.14(b),(c),(d) in \cite{Chermak:2015}.
\end{proof}

The reader might note that what we call a coset would be more precisely called a right coset. The distinction does however not matter very much, since we are mostly interested in the maximal cosets and, by \cite[Proposition~3.14(a)]{Chermak:2015}, we have $\K f=f\K$ for any $\uparrow_\K$-maximal element $f\in\L$. By Proposition~\ref{MaxCosets}(b), we can define a map
$$\rho:\L\rightarrow \ov{\L}:=\L/\K$$ 
sending $f\in\L$ to the unique maximal coset of $\K$ containing $f$. This should be thought of as a ``quotient map''. We adopt the bar notation similarly as used for groups. Thus, if $X$ is an element or a subset of $\L$, then $\ov{X}$ denotes the image of $X$ under $\rho$. Furthermore, if $X$ is an element or a subset of $\mathbf{W}(\L)$ then $\ov{X}$ denotes the image of $X$ under $\rho^*$, where $\rho^*$ denotes the map $\mathbf{W}(\L)\rightarrow \mathbf{W}(\L)$ with $(f_1,\dots,f_n)\rho^*=(f_1\rho,\dots,f_n\rho)$. In particular,
$$\ov{\D}=\D\rho^*.$$
We note:

\begin{property}\label{-1}
Let $f,g\in\L$ such that $\ov{g}=\ov{f}$ and $f$ is $\uparrow_\K$-maximal. Then $g\in\K f$.
\end{property}

\begin{proof}
By Proposition~\ref{MaxCosets}(a), $\K f$ is a maximal coset, so $\ov{g}=\ov{f}=\K f$ by the definition of $\rho$. Hence, again by the definition of $\rho$, $g\in\K f$. 
\end{proof}

Recall the definition of a homomorphism of a partial groups from \cite[Definition~3.1]{Chermak:2013} and \cite[Definition~1.11]{Chermak:2015}. By \cite[Lemma~3.16]{Chermak:2015}, there is a unique mapping $\ov{\Pi}:\ov{\mathbf{D}}\rightarrow \ov{\L}$ and a unique involutory bijection $\ov{f}\mapsto\ov{f}^{-1}$ such that $\ov{\L}$ with these structures is a partial group and $\rho$ is a homomorphism of partial groups. Since $\rho$ is a homomorphism, we have $\ov{\Pi}(\ov{v})=\ov{\Pi}(v\rho^*)=\Pi(v)\rho=\ov{\Pi(v)}$ for $v\in\D$ and $\ov{f}^{-1}=\ov{f^{-1}}$ by the definition of a homomorphism of partial groups and by \cite[Lemma~1.13]{Chermak:2015}. In particular, $\ov{\One}=\ov{\Pi}(\emptyset)$ is the identity element in $\ov{\L}$. So $\rho$ has kernel $\ker(\rho)=\{f\in\L\colon \ov{f}=\ov{\One}\}=\K\One=\K$. By \cite[Proposition~4.2]{Chermak:2015}, $(\ov{\L},\ov{\Delta},\ov{S})$ is a locality for $\ov{\Delta}:=\{\ov{P}\colon P\in\Delta\}$. We will use this important fact throughout without further reference. We remark:

\begin{property}\label{0}
Let $v=(f_1,\dots,f_n)\in\mathbf{W}(\L)$ such that each $f_i$ is $\uparrow_\K$-maximal and $\ov{v}\in\ov{\D}$. Then $v\in\D$ and $\ov{\Pi}(\ov{v})=\ov{\Pi(v)}$.
\end{property}

\begin{proof}
As $\ov{v}\in\ov{\D}$, there is $u=(g_1,\dots,g_n)\in\D$ such that $\ov{u}=\ov{v}$. Then $\ov{g_i}=\ov{f_i}$ for $i=1,\dots,n$, i.e. $g_i\in\K f_i$ by \ref{-1}. Now by \cite[Proposition~3.14(e)]{Chermak:2015}, $v\in\D$. As seen above, since $\rho$ is a homomorphism of partial groups, $\ov{\Pi}(\ov{v})=\ov{\Pi(v)}$.
\end{proof}

There is a nice correspondence between the partial subgroups of $\L$ containing $\K$ and the partial subgroups of $\ov{\L}$.

\begin{property}\label{PartialSubgroupCorrespondence}
Let $\mathfrak{H}$ be the set of partial subgroups of $\L$ containing $\K$.
\begin{itemize}
\item [(a)] Let $\H\in\mathfrak{H}$. Then the maximal cosets of $\K$ contained in $\H$ form a partition of $\H$.
\item [(b)] Write $\ov{\mathfrak{H}}$ for the set of partial subgroups of $\L$. Then the map $\mathfrak{H}\rightarrow \ov{\mathfrak{H}}$ with $\H\mapsto\ov{\H}$ is well-defined and a bijection. Moreover, for any $\H\in\mathfrak{H}$, we have $\ov{\H}\unlhd \ov{\L}$ if and only if $\H\unlhd \L$.
\end{itemize}
\end{property}

\begin{proof}
Property (a) is \cite[Lemma~3.15]{Chermak:2015}. The map $\rho$ is a homomorphism of partial groups and $(\ov{\L},\ov{\Delta},\ov{S})$ is a locality. From the way $\ov{\D}$ and $\ov{\Delta}$ are defined, it follows that $\rho$ is a projection in the sense of \cite[Definition~4.5]{Chermak:2015}. Hence, property (b) is a reformulation of \cite[Proposition~4.8]{Chermak:2015}.
\end{proof}

\begin{property}\label{Pre2}
 For any subset $X$ of $\L$ and for any partial subgroup $\H$ of $\L$ containing $\K$, $\ov{X}\cap\ov{\H}=\ov{X\cap\H}$.
\end{property}

\begin{proof}
Clearly, $\ov{X\cap\H}\subseteq \ov{X}\cap\ov{\H}$. Let now $x\in X$ such that $\ov{x}\in\ov{\H}$. Then there exists $h\in\H$ such that $\ov{x}=\ov{h}$ and, by \ref{PartialSubgroupCorrespondence}(a), we may choose $h$ such that $\K h$ is a maximal coset. By the definition of $\rho$, this means $x\in \K h\subseteq \H$ and hence $x\in X\cap\H$. Thus $\ov{x}\in\ov{X\cap \H}$, proving $\ov{X}\cap \ov{\H}\subseteq\ov{X\cap\H}$. 
\end{proof}

\begin{property}\label{PreimagesS0}
 Let $R\leq S$. Then $\{f\in\L\colon \ov{f}\in\ov{R}\}=\K R$
\end{property}

\begin{proof}
 Clearly, $\ov{f}\in\ov{R}$ for any $f\in\K R$, as $\K$ is the kernel of $\rho$. Let now $f\in\L$ and $r\in R$ with $\ov{f}=\ov{r}$. As every element of $S$ is $\uparrow_\K$-maximal by \ref{uparrow}(a), it follows from \ref{-1} that $f\in\K r\subseteq \K R$. This proves the assertion.
\end{proof}

\begin{property}\label{PreimagesS}
 Let $T\leq R\leq S$. Then $R=\{s\in S\colon \ov{s}\in\ov{R}\}$ and $N_{\ov{S}}(\ov{R})=\ov{N_S(R)}$.
\end{property}

\begin{proof}
By \ref{PreimagesS0} and the Dedekind Lemma \ref{Dedekind}, we have $\{s\in S\colon \ov{s}\in\ov{R}\}=S\cap (\K R)=(S\cap\K)R=TR=R$. Moreover, for any element $t\in S$ with $\ov{t}\in N_{\ov{S}}(\ov{R})$ and any $r\in R$, we have $\ov{r^t}=\ov{r}^{\ov{t}}\in\ov{R}$, so $r^t\in \{s\in S\colon \ov{s}\in\ov{R}\}=R$. Hence, $N_{\ov{S}}(\ov{R})\leq \ov{N_S(R)}$. As $\rho$ is a homomorphism of partial groups, $\ov{N_S(R)}\subseteq N_{\ov{S}}(\ov{R})$, so the assertion holds.
\end{proof}

\begin{property}\label{Sfbar}
For every $f\in \L$ such that $f$ is $\uparrow_\K$-maximal, we have $\ov{S_f}=\ov{S}_{\ov{f}}$
\end{property}

\begin{proof}
Set $P=S_f$ and $Q=P^f$. As $\rho$ is a homomorphism of partial groups, one easily observes that $\ov{P}\subseteq\ov{S}_{\ov{f}}$. As $(\ov{\L},\ov{\Delta},\ov{S})$ is a locality, $\ov{S}_{\ov{f}}$ is a $p$-group. So assuming the assertion is wrong, there exists $a\in S$ such that $\ov{a}\in N_{\ov{S}_{\ov{f}}}(\ov{P})\backslash\ov{P}$. As $f$ is $\uparrow$-maximal, $T\leq P=S_f$ by \ref{uparrow}(b). Hence, by \ref{PreimagesS} applied with $P$ in the role of $R$, $\ov{a}\in \ov{N_S(P)}$. So by \ref{PreimagesS} now applied with $N_S(P)$ in the role of $R$, $a\in N_S(P)$. Using \ref{LocalitiesProp}(a),(b), we conclude that $A:=P\<a\>$ is a $p$-subgroup of the group $N_\L(P)$ and that $A^f$ is a $p$-subgroup of the group $N_\L(Q)$. As $\ov{A}^{\ov{f}}\subseteq \ov{S}$, we have $\ov{A}^{\ov{f}}\subseteq N_{\ov{S}}(\ov{Q})$. By \ref{T}(a), $T=T^f\leq Q$. Thus, by \ref{PreimagesS}, $\ov{A}^{\ov{f}}\subseteq \ov{N_S(Q)}$. Now \ref{PreimagesS0} yields $A^f\subseteq (\K N_S(Q))\cap N_\L(Q)=N_\K(Q)N_S(Q)$, where the last equality uses the Dedekind Lemma \ref{Dedekind}. Recall that $N_\L(Q)$ is a finite group. Clearly, $N_\K(Q)$ is a normal subgroup of $N_\L(Q)$. It follows from \ref{T}(b) that $T\in\Syl_p(N_\K(Q))$. So $N_S(Q)\in\Syl_p(N_\K(Q)N_S(Q))$ and by Sylow's theorem, there exists $c\in N_\K(Q)$ such that $A^{fc}\leq N_S(Q)$. Then $(f,P)\uparrow_\K (fc,A)$ via $(\One,c)$ contrary to $f$ being $\uparrow_\K$-maximal.
\end{proof}

\begin{property}\label{FindMax}
 Let $f,g\in\L$ such that $\ov{f}=\ov{g}$, $S_f=S_g$ and $f$ is $\uparrow_\K$-maximal. Then $g$ is $\uparrow_\K$-maximal and $\K f=\K g$.
\end{property}

\begin{proof}
As $f$ is $\uparrow_\K$-maximal and $\ov{f}=\ov{g}$, we have $g\in\K f$ by \ref{-1}, i.e. there exists $k\in\K$ with $(k,f)\in\D$ and $g=kf$. By Stellmacher's Splitting Lemma \ref{uparrow}(c), we have $S_g=S_{kf}=S_{(k,f)}$. Hence, $S_f=S_g=S_{(k,f)}$ and thus $k\in N_\L(S_f)$. 
By \ref{LocalitiesProp}(c), $k^{-1}\in N_\L(S_f)$ and $(k^{-1},k,f)\in\D$ as via $S_f$. Hence, $(k^{-1},g)=(k^{-1},kf)\in\D$, $k^{-1}g=k^{-1}(kf)=k^{-1}kf=(k^{-1}k)f=f$ and $S_f^g=S_f^f$. This shows that $(f,S_f)\uparrow_\K (g,S_f)$ via $(k^{-1},\One)$. We conclude that $g$ is $\uparrow_\K$-maximal as $f$ is $\uparrow_\K$-maximal and $\uparrow_\K$ is transitive. By \ref{MaxCosets}, $\K g$ and $\K f$ are both maximal cosets, and the maximal cosets of $\K$ form a partition of $\L$. So it follows that $\K f=\K g$.
\end{proof}

\section{Proof of Theorem~\ref{Main}}

Throughout this section assume the hypothesis of Theorem~\ref{Main}. Set 
$$\K:=\M\cap\N.$$
Observe that $\K$ is a partial normal subgroup of $\L$. As in Section~\ref{Quotients}, let
$$\rho:\L\rightarrow \ov{\L}:=\L/\K$$ 
be the quotient map sending $f\in\L$ to the unique maximal coset of $\K$ containing $f$, and use the bar notation as introduced there. Set
$$T:=\K\cap S.$$

\begin{property}\label{2}
 $\ov{\M}\cap\ov{\N}=1$.
\end{property}

\begin{proof}
 As $\K$ is contained in $\M$ and $\N$, this is a special case of \ref{Pre2}.
\end{proof}

\begin{property}\label{3}
We have $\ov{\M}\,\ov{\N}=\ov{\N}\,\ov{\M}$, and $\ov{\M}\,\ov{\N}$ is a partial normal subgroup of $\ov{\L}$. Moreover, for any $x\in\ov{\M}\,\ov{\N}$, there exist $\ov{m}\in\ov{\M}$ and $\ov{n}\in\ov{\N}$ such that $(\ov{m},\ov{n})\in\ov{\D}$, $x=\ov{m}\,\ov{n}$ and $\ov{S}_x=\ov{S}_{(\ov{m},\ov{n})}$.  
\end{property}

\begin{proof}
By \ref{PartialSubgroupCorrespondence}(b), $\ov{\M}$ and $\ov{\N}$ are partial normal subgroups of $\ov{\L}$. By \ref{2}, $\ov{\M}\cap\ov{\N}=1$. Hence, the assertion follows from Theorem~\ref{ImageSituation}. 
\end{proof}

\begin{property}\label{Split0}
Let $x\in\ov{\M}\,\ov{\N}$. Then there exist $m\in\M$ and $n\in\N$ with $(m,n)\in\D$ such that $m$, $n$ and $mn$ are $\uparrow_\K$-maximal, $x=\ov{m}\,\ov{n}=\ov{mn}$ and $S_{mn}=S_{(m,n)}$. 
\end{property}

\begin{proof}
 By \ref{3}, there exist $\ov{m}\in\ov{\M}$, $\ov{n}\in\ov{\N}$ such that $(\ov{m},\ov{n})\in\ov{\D}$, $x=\ov{m}\,\ov{n}$ and $\ov{S}_{x}=\ov{S}_{(\ov{m},\ov{n})}$. By \ref{PartialSubgroupCorrespondence}(a), we may furthermore choose preimages $m\in\M$ and $n\in\N$ of $\ov{m}$ and $\ov{n}$ such that $m$ and $n$ are $\uparrow_\K$-maximal. Then, by \ref{T}(a) and \ref{uparrow}(b), $m,n\in N_\L(T)$. By \ref{0}, $(m,n)\in\D$ and $\ov{m}\,\ov{n}=\ov{mn}$. It remains to prove that $S_{mn}=S_{(m,n)}$ and that $mn$ is $\uparrow_\K$-maximal. As an intermediate step we prove the following two properties:
\begin{eqnarray}
\label{eq1}S_f&\subseteq& S_{(m,n)}\mbox{ for every }f\in\L\mbox{ with }\ov{f}=x.\\
\label{eq2}S_f&=&S_{(m,n)}\mbox{ for every $\uparrow_\K$-maximal element }f\in\L\mbox{ with }\ov{f}=x.
\end{eqnarray}
For the proof of (\ref{eq1}) and (\ref{eq2}) note first that, by \ref{Sfbar}, $\ov{S}_{\ov{m}}=\ov{S_m}$ and $\ov{S}_{\ov{n}}=\ov{S_n}$ as $m$ and $n$ are $\uparrow_\K$-maximal. Hence
$$\ov{S}_{x}=\ov{S}_{(\ov{m},\ov{n})}=\{\ov{s}\colon \ov{s}\in\ov{S}_{\ov{m}},\;\ov{s}^{\ov{m}}\in\ov{S}_{\ov{n}}\}
=\{\ov{s}\colon s\in S_m,\;\ov{s}^{\ov{m}}\in\ov{S_n}\}.$$
If $s\in S_m$ then, by definition of $S_m$, $(m^{-1},s,m)\in\D$ and $s^m\in S$. Moreover, as $\rho$ is a homomorphism of partial groups, $\ov{s^m}=\ov{s}^{\ov{m}}$. So $\ov{s}^{\ov{m}}\in\ov{S_n}$ is equivalent to $s^m\in S_n$ by \ref{PreimagesS} since $T\leq S_n$. Hence,
$$\ov{S}_{x}=\{\ov{s}:s\in S_m,\;s^m\in S_n\}=\ov{S_{(m,n)}}.$$
As $m,n\in N_\L(T)$, $T\leq S_{(m,n)}$. Clearly, $\ov{S_f}\subseteq \ov{S}_x$ for every $f\in\L$ with $\ov{f}=x$. If such $f$ is in addition $\uparrow_\K$-maximal, then $\ov{S_f}=\ov{S}_x$ and $T\leq S_f$ by \ref{Sfbar} and \ref{uparrow}(b). Now (\ref{eq1}) and (\ref{eq2}) follow from \ref{PreimagesS}. As $\ov{mn}=\ov{m}\,\ov{n}=x$, (\ref{eq1}) yields in particular $S_{mn}\subseteq S_{(m,n)}$ and thus $S_{mn}=S_{(m,n)}$ by \ref{LocalitiesProp}(e). Choosing $f\in\L$ to be $\uparrow_\K$-maximal with $\ov{f}=x$, we obtain from (\ref{eq2}) that $S_f=S_{(m,n)}=S_{mn}$. So $mn$ is $\uparrow_\K$-maximal by \ref{FindMax} completing the proof.
\end{proof}

\begin{property}\label{Split}
 Let $f\in\L$ with $\ov{f}\in\ov{\M}\,\ov{\N}$. Then $f\in\M\N$ and there exist $m\in\M$, $n\in\N$ with $(m,n)\in\D$, $f=mn$ and $S_f=S_{(m,n)}$.
\end{property}

\begin{proof}
 By \ref{Split0}, we can choose $m\in\M$ and $n\in\N$ with $(m,n)\in\D$ such that $mn$ is $\uparrow_\K$-maximal, $\ov{f}=\ov{mn}$ and $S_{mn}=S_{(m,n)}$. Then there  exists $k\in\K$ with $(k,mn)\in\D$ and $f=k(mn)$. As $S_{mn}=S_{(m,n)}$, it follows $S_{(k,mn)}=S_{(k,m,n)}$ and $(k,m,n)\in\D$ by \ref{LocalitiesProp}(e). Hence, $(km,n)\in\D$ and $f=(km)n$ by the axioms of a partial group. As $\K\subseteq\M$, we have $km\in\M$ and so $f=(km)n\in\M\N$. It is now sufficient to show that $S_{(km,n)}=S_f$. As $mn$ is $\uparrow_\K$-maximal, it follows from Stellmacher's Splitting Lemma \ref{uparrow}(c) that $S_f=S_{k(mn)}=S_{(k,mn)}=S_{(k,m,n)}\subseteq S_{(km,n)}$. By \ref{LocalitiesProp}(e), $S_{(km,n)}\subseteq S_{(km)n}=S_f$. So $S_f=S_{(km,n)}$, proving the assertion.
\end{proof}

\begin{proof}[Proof of Theorem~\ref{Main}]
By \ref{3} and \ref{PartialSubgroupCorrespondence}(b), there exists a partial normal subgroup $\H$ of $\L$ containing $\K$ such that $\ov{\H}=\ov{\M}\,\ov{\N}=\ov{\N}\,\ov{\M}$. Then for any $f\in\L$ with $\ov{f}\in\ov{\M}\,\ov{\N}$, there exists $h\in\H$ with $\ov{f}=\ov{h}$. By \ref{PartialSubgroupCorrespondence}(a), we can choose $h$ to by $\uparrow_\K$-maximal. So by \ref{-1}, $f\in\K h\subseteq\H$. This shows $\H=\{f\in\L\colon \ov{f}\in\ov{\M}\,\ov{\N}\}$. 

\smallskip

We need to prove that $\H=\M\N=\N\M$. As the situation is symmetric in $\M$ and $\N$, it is enough to prove that $\H=\M\N$. Since $\rho$ is a homomorphism, for any $m\in\M$ and $n\in\N$ with $(m,n)\in\D$, we have $\ov{mn}=\ov{m}\,\ov{n}\in\ov{\M}\,\ov{\N}$ and thus $mn\in\H$. Hence, $\M\N\subseteq \H$. By \ref{Split}, we have $\H\subseteq \M\N$, so $\H=\M\N$. Moreover, \ref{Split} shows that for every $f\in\M\N$, there exists $m\in\M$ and $n\in\N$ such that $(m,n)\in\D$, $f=mn$ and $S_f=S_{(m,n)}$. So it only remains to prove that $S\cap (\M\N)=(S\cap \M)(S\cap\N)$. Clearly, $(S\cap\M)(S\cap\N)\subseteq S\cap(\M\N)$. Let now $s\in S\cap(\M\N)$. By what we just said, there exists $m\in\M$ and $n\in\N$ with $(m,n)\in\D$, $s=mn$ and $S_s=S_{(m,n)}$. As $S_s=S$ it follows $m,n\in G:=N_\L(S)$. By \ref{LocalitiesProp}(a), $G$ is a subgroup of $\L$. Furthermore, property (L1) in the definition of a locality implies that $S$ is a Sylow $p$-subgroup of $G$. Note that $X:=G\cap\M$ and $Y:=G\cap \N$ are normal subgroups of $G$. Hence, $s=mn\in (XY)\cap S=(X\cap S)(Y\cap S)=(\M\cap S)(\N\cap S)$ completing the proof.
\end{proof}

\section{The Proof of Theorem~\ref{Main2}}

Throughout let $(\L,\Delta,S)$ be a locality with partial normal subgroups $\N_1,\dots,\N_l$. We prove Theorem~\ref{Main2} in a series of lemmas.

\begin{property}\label{Step1}
 Let $1\leq k <l$ such that the products $\N_1\N_2\dots \N_k$ and $\N_{k+1}\N_{k+2}\dots \N_l$ are partial normal subgroups. Suppose furthermore that for any $f\in\N_1\dots \N_k$ and any $g\in\N_{k+1}\dots \N_l$ there exist $u=(n_1,\dots,n_k),v=(n_{k+1},\dots,n_l)\in\D$ such that $n_i\in\N_i$ for $i=1,\dots,l$, $f=\Pi(u)$, $g=\Pi(v)$, $S_f=S_{u}$ and $S_g=S_{v}$. Then $$\N_1\N_2\dots\N_l=(\N_1\N_2\dots \N_k)(\N_{k+1}\N_{k+2}\dots \N_l)$$ 
is a partial normal subgroup of $\L$, and for every $h\in\N_1,\dots,\N_l$ there exists $w=(n_1,\dots,n_l)\in\D$ such that $n_i\in\N_i$ for $i=1,\dots,l$, $h=\Pi(w)$ and $S_h=S_{w}$.
\end{property}

\begin{proof}
By Theorem~\ref{Main}, $(\N_1\N_2\dots \N_k)(\N_{k+1}\N_{k+2}\dots \N_l)$ is a partial normal subgroup of $\L$. If $w=(n_1,\dots,n_l)\in\D$ with $n_i\in\N_i$ for $i=1,\dots,l$, then $u=(n_1,\dots,n_k),v=(n_{k+1},\dots,n_l)\in\D$ and $\Pi(w)=\Pi(\Pi(u),\Pi(v))\in (\N_1\N_2\dots \N_k)(\N_{k+1}\N_{k+2}\dots \N_l)$. This proves $\N_1\N_2\dots \N_l\subseteq (\N_1\N_2\dots \N_k)(\N_{k+1}\N_{k+2}\dots \N_l)$. To prove the converse inclusion, let now 
$$h\in(\N_1\N_2\dots\N_k)(\N_{k+1}\N_{k+2}\dots\N_l).$$ 
Then by Theorem~\ref{Main}, there exists $f\in\N_1\dots \N_k$ and $g\in\N_{k+1}\dots \N_l$ such that $(f,g)\in\D$, $h=fg$ and $S_h=S_{(f,g)}$. By assumption, there exist $u=(n_1,\dots,n_k),v=(n_{k+1},\dots,n_l)\in\D$ such that $n_i\in\N_i$ for $i=1,\dots,l$, $f=\Pi(u)$, $g=\Pi(v)$, $S_f=S_{u}$ and $S_g=S_{v}$. Then $S_h=S_{(f,g)}=S_{u\circ v}$, $u\circ v\in\D$ via $S_h$, and $h=fg=\Pi(\Pi(u),\Pi(v))=\Pi(u\circ v)\in \N_1\N_2\dots \N_l$, proving the assertion.
\end{proof}

\begin{property}\label{Step2}
\begin{itemize}
 \item [(a)] The product $\N_1\N_2\dots \N_l$ is a partial normal subgroup, and for every $f\in\N_1\N_2\dots\N_l$ there exists $w=(n_1,\dots,n_l)\in\D$ such that $n_i\in\N_i$ for $i=1,\dots,l$, $f=\Pi(w)$ and $S_f=S_{w}$.  
 \item [(b)] We have $\N_1\N_2\dots\N_l=(\N_1\N_2\dots \N_k)(\N_{k+1}\N_{k+2}\dots \N_l)$ for every $1\leq k<l$.
\end{itemize}
\end{property}

\begin{proof}
 We prove this by induction on $l$. Clearly, the claim is true for $l=1$. Assume now $l>1$. Then there exists always $1\leq k<l$. For any such $k$, it follows by induction (one time applied with $\N_1,\dots,\N_k$ and one time applied with $\N_{k+1},\dots,\N_l$ in place of $\N_1,\dots,\N_l$) that the hypothesis of \ref{Step1} is fulfilled. So \ref{Step1} implies directly the assertion. 
\end{proof}

\begin{property}\label{Step4}
 Let $\sigma\in S_l$ be a permutation. Then $\N_1\N_2\dots \N_l=\N_{1\sigma}\N_{2\sigma}\dots\N_{l\sigma}$.
\end{property}

\begin{proof}
We may assume that $\sigma=(i,i+1)$ for some $1\leq i<l$, as $S_l$ is generated by transpositions of this form. Note that $\N_1\dots\N_{i-1}$, $\N_i\N_{i+1}$ and $\N_{i+2}\dots \N_l$ are partial normal subgroups by \ref{Step2}(a), where it is understood that $\N_r\dots\N_s=\{\One\}$ if $r>s$. By Theorem~\ref{Main}, we have $\M\N=\N\M$ for any two partial normal subgroups. Using this fact and \ref{Step2}(b) repeatedly, we obtain
\begin{eqnarray*}
 \N_1\N_2\dots\N_l&=&(\N_1\dots\N_{i-1})(\N_i\N_{i+1})(\N_{i+2}\dots\N_l)\\
&=& (\N_1\dots\N_{i-1})(\N_{i+1}\N_{i})(\N_{i+2}\dots\N_l)\\
&=& \N_{1\sigma}\N_{2\sigma}\dots\N_{l\sigma}.
\end{eqnarray*} 
\end{proof}

\begin{proof}[Proof of Theorem~\ref{Main2}]
 It follows from \ref{Step2}(a) that $\N_1\dots\N_l$ is a partial normal subgroup and that (c) holds. Property (a) is \ref{Step2}(b), and property (b) is \ref{Step4}.
\end{proof}

\bibliographystyle{amsplain}
\bibliography{gpfus}


\end{document}